\documentclass[leqno,12pt]{amsart}
\usepackage{amssymb,mathdots,epsfig}
\usepackage{enumitem}
\usepackage[T1]{fontenc}
\usepackage{fullpage}
\usepackage{url} 

\theoremstyle{plain}
\newtheorem{theorem}{Theorem}
\newtheorem{corollary}{Corollary}

\theoremstyle{definition}

\newtheorem{example}{Example}
\newtheorem{remark}{Remark}
\theoremstyle{remark}

\numberwithin{equation}{section}

\usepackage{graphicx}

\usepackage{array,longtable}
\newcolumntype{C}{>{$}c<{$}}

\newcommand{\Z}{\mathbb{Z}}
\newcommand{\N}{\mathbb{N}}

\newcommand{\lra}{\Longrightarrow}

\newcommand{\bfA}{\mathbf{A}}
\newcommand{\bfI}{\ensuremath{\mathbf{I}}}
\newcommand{\bfb}{\ensuremath{\mathbf{b}}}

\newcommand{\bfx}{\ensuremath{\mathbf{x}}}
\newcommand{\tr}{\mathrm{t}}

\newcommand{\bfz}{\mathbf{z}}

\newcommand{\bfQ}{\mathbf{Q}}

\renewcommand{\pmod}[1]{\ensuremath{\,\,\,(\mathrm{mod}\,#1)}}

\usepackage{hyperref}

\newtheorem{manualtheoreminner}{\textup{\textbf{Theorem}}}
\newenvironment{manualtheorem}[1]{%
  \IfBlankTF{#1}
    {\renewcommand{\themanualtheoreminner}{\unskip}}
    {\renewcommand\themanualtheoreminner{#1}}%
  \manualtheoreminner
}{\endmanualtheoreminner}

\newcommand{\<}{\langle}
\renewcommand{\>}{\rangle}

\renewcommand{\(}{\begin{equation}}
\renewcommand{\)}{\end{equation}}

\begin{document}


\title[Vanishing Coefficients in  Products of Quintuple Products]
       {Vanishing coefficients in products of quintuple products}

\author{Taylor Daniels, Tim Huber, James McLaughlin and Dongxi Ye}

\address{Purdue University, 150 N University St, W Lafayette, IN 47907}
\email{daniel84@purdue.edu}

\address{
School of Mathematical and Statistical Sciences, University of Texas Rio Grande Valley, Edinburg, Texas 78539, USA}
\email{timothy.huber@utrgv.edu}

\address{Mathematics Department\\
 25 University Avenue\\
West Chester University, West Chester, PA 19383}
\email{jmclaughlin2@wcupa.edu}

\address{
School of AI and Liberal Arts, Beijing Normal-Hong Kong Baptist University, Zhuhai 519082, Guangdong,
People's Republic of China}
\email{dongxiye@bnbu.edu.cn}

\keywords{Quintuple product;  Vanishing Coefficients}
\thanks{Dongxi Ye was supported by the Guangdong Basic and Applied Basic Research Foundation (Grant No. 2024A1515030222)  and the BNBU Start-up Research Fund (Grant No. R0700157-26).}

\subjclass[2000]{Primary: 11B65.}

\date{\today}

\begin{abstract}
{Explicit arithmetic progressions modulo primes $p \equiv 1 \pmod{4}$ are derived in which the coefficients in the expansions of products of quintuple products vanish. In particular, if $p = m^{2} + n^{2}$, and $b$ is a positive integer, and $$\sum_{n=0}^{\infty} a_{n}q^{n} = \frac{(q^{2bm},q^{p-2bm};q^{2bn},q^{p-2bn};q^p)_{\infty}}{(q^p,-q^{b m},-q^{p-bm},-q^{bn},-q^{p-bn};q^p)_{\infty}^2},$$ we determine $\alpha = \alpha(m,n,p)$ such that $a_{pt+ \alpha}=0$. Our results are proven  using involutive transformations on integer lattices.}
\end{abstract}

\maketitle
\allowdisplaybreaks

\section{Introduction}

The study of vanishing coefficients in the series expansions of infinite $q$-products, along with the periodic behavior of their signs, dates back at least to the work of Richmond and Szekeres \cite{RS78}. They studied the coefficients of a number of products, including $\{d_n\}$ defined by
    \[
        F(q) := \frac{(q^3, q^5; q^8)_{\infty}}{(q,q^7;q^8)_{\infty}} = \sum_{n=0}^{\infty} d_n q^n,
    \]
where here and subsequently the standard notation $(a_1,  \dots, a_j; q)_{\infty} := (a_1;q)_{\infty} \cdots (a_j;q)_{\infty}$ with
$$
(a;q)_{\infty}:=\prod_{n=0}^{\infty}(1-aq^{n})
$$
is employed. Richmond and Szekeres used the Hardy-Ramanujan-Rademacher Circle Method to show that $d_{4n+3} = 0$. They found a similar vanishing result for the coefficients of $1/F(q)$. These results were later generalized to infinite families of products in works by Andrews and Bressoud \cite{AB79}, Alladi and Gordon \cite{AG94}, and the third author of the present paper \cite{McL15}. Proofs of these latter results relied on a special case of Ramanujan's ${}_1\psi_1$ summation formula. 

Related results have appeared in a large number of works, including \cite{BK19,CT20,CT24,D24,Hirschhorn2017,KV22,L24,M19a,McLZ22,ST23,T19,T19b,T23,T23b,T24,VK22,XiaZhao2022}. The strategies of proof in each case range from the Circle Method to the use of $q$-hypergeometric identities and transformations; and series dissections. Each approach is often motivated by computer algebra experimentation.

In this paper, we derive conditions for vanishing that apply to $\{a_n\}$ defined by
    \begin{align}\label{anqn}
        \sum_{n=0}^{\infty} a_{n}q^{n} &= \frac{(q^{2bm},q^{p-2bm};q^{2bn},q^{p-2bn};q^p)_{\infty}}{(q^p,-q^{bm},-q^{p-bm},-q^{bn},-q^{p-bn};q^p)_{\infty}^2} \\ 
        & = (q^{bm},q^{p-bm},q^{bn},q^{p-bn},q^{p},q^{p};q^{p})_{\infty} \nonumber  \\
        &\qquad \qquad \times (q^{p-2bm},q^{2bm+p},q^{p-2bn},q^{2bn+p};q^{2p})_{\infty}, \notag
    \end{align} 
for primes $p \equiv 1 \pmod{4}$, $p = m^{2} + n^{2}$, and $b$ a positive integer. These vanishing conditions arise from symmetries in the expansions of products of two instances of the Quintuple Product Identity appearing in Theorem \ref{qpt1}.

Recall that the quintuple product is defined by 
    \begin{equation}\label{qtpzqdef}
        Q(z,q):=\frac{\< z^2;q\>_{\infty}}{(-z,-q/z;q)_{\infty}}, 
    \end{equation}
where $\<a;q\>_{\infty}:=(a,q/a,q;q)_{\infty}$, and note that~\eqref{anqn} becomes
$$
\sum_{n=0}^{\infty}a_{n}q^{n}=Q(q^{bm},q^{p})Q(q^{bn},q^{p}).
$$

For usage in what follows, we recall two equivalent versions of the Jacobi triple product identity (see, e.g., \cite{Hirschhorn2017}):

\begin{theorem}\label{jtpt}
For $|q|<1$ and $z \not = 0$,
    \(\label{6t1eq}
        \sum_{n=-\infty}^{\infty}(-z)^n q^{n^2} = \<zq;q^2\>_{\infty},
    \)
and
    \(\label{eq:TripleProduct}
        \sum_{n=-\infty}^{\infty}(-z)^n q^{n(n-1)/2} = \<z;q\>_{\infty}.
    \)
\end{theorem}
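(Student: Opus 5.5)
The plan is to prove \eqref{eq:TripleProduct} first and then deduce \eqref{6t1eq} from it by a change of variables. Write $f(z)=\<z;q\>_{\infty}=(z,q/z,q;q)_{\infty}$. For fixed $q$ with $|q|<1$, $f$ is analytic in $z$ on $\mathbb{C}\setminus\{0\}$, because the products converge absolutely and locally uniformly there. It therefore has a Laurent expansion $f(z)=\sum_{n}c_n(q)z^n$. Replacing $z$ by $qz$ gives
\[
(qz;q)_\infty=\frac{(z;q)_\infty}{1-z},\qquad (1/z;q)_\infty=(1-1/z)\,(q/z;q)_\infty ,
\]
and hence the functional equation
\[
f(qz)=\frac{1-1/z}{1-z}\,f(z)=-z^{-1}f(z).
\]
Comparing coefficients of $z^n$ yields $c_{n}q^{n}=-c_{n+1}$, i.e. $c_{n+1}=-q^n c_n$. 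Iterating in both directions, $c_n=(-1)^n q^{n(n-1)/2}c_0$ for every $n\in\Z$.

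The main obstacle is identifying the constant $c_0=c_0(q)$; the target is $c_0=1$. For this I would use the standard Euler-expansion argument. Expand $(z;q)_\infty=\sum_{k\ge0}\frac{(-1)^kq^{k(k-1)/2}}{(q;q)_k}z^k$ and $(q/z;q)_\infty=\sum_{j\ge0}\frac{(-1)^jq^{j(j+1)/2}}{(q;q)_j}z^{-j}$. Multiplying these, the constant term is
\[
c_0=(q;q)_\infty\sum_{k\ge0}\frac{q^{k^2}}{(q;q)_k^2}.
\]
The Durfee-square identity $\sum_k q^{k^2}/(q;q)_k^2=1/(q;q)_\infty$ then gives $c_0=1$. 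If one prefers to avoid quoting Durfee squares, there is an alternative. First specialise to $z$ a primitive fourth or sixth root of unity, or use Gauss's trick of evaluating at $z=i q^{1/2}$, to get a functional relation $c_0(q)=c_0(q^4)$. Iterating and letting $q\to0$ then gives $c_0=\lim c_0(q^{4^k})=c_0(0)=1$, using that $c_0$ is a power series in $q$ with constant term $1$. I would present the Durfee-square route as the primary one, since it is self-contained.

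With $c_0=1$ we get $\<z;q\>_\infty=\sum_n(-1)^nq^{n(n-1)/2}z^n$, which is \eqref{eq:TripleProduct}. For \eqref{6t1eq}, replace $q$ by $q^2$ and $z$ by $zq$ in \eqref{eq:TripleProduct}. The left side becomes $\sum_n(-zq)^nq^{n(n-1)}=\sum_n(-z)^nq^{n^2}$, and the right side is $\<zq;q^2\>_\infty$, as required. All rearrangements are justified by absolute convergence for $|q|<1$ and $z\neq0$.
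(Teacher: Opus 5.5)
Your proof is correct. The functional equation $f(qz)=-z^{-1}f(z)$ gives $c_n=(-1)^nq^{n(n-1)/2}c_0$. Multiplying Euler's two expansions, the constant term is indeed $(q;q)_\infty\sum_{k\ge0}q^{k^2}/(q;q)_k^2$, and this equals $1$ by the Durfee-square identity. Finally, the substitution $q\to q^2$, $z\to zq$ turns \eqref{eq:TripleProduct} into \eqref{6t1eq}, since $q^2/(zq)=q/z$ gives $\<zq;q^2\>_\infty=(zq,q/z,q^2;q^2)_\infty$.

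The paper gives no proof of this theorem. It recalls the Jacobi triple product as a classical identity, citing \cite{Hirschhorn2017}, and uses it only as a tool to write $Q(q^{bm},q^p)Q(q^{bn},q^p)(q^{2p};q^{2p})_\infty^2$ as a fourfold sum over $\Z^4$. So there is no competing argument to compare against. Your write-up makes that step self-contained, at the cost of importing two other classical facts: Euler's expansion of $(z;q)_\infty$ and the Durfee-square identity. That is a reasonable trade for a standard result.

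Your fallback normalization also works. At $z=iq^{1/2}$ the product equals $(-q;q^2)_\infty(q;q)_\infty=(q^2;q^4)_\infty^2(q^4;q^4)_\infty$. The series collapses to $c_0(q)\sum_m(-1)^mq^{2m^2}$. The same identity with base $q^4$ and $z=q^2$ expresses that product as $c_0(q^4)\sum_m(-1)^mq^{2m^2}$, with the sum nonzero. Hence $c_0(q)=c_0(q^4)$, and analyticity of $c_0$ at $q=0$ with $c_0(0)=1$ gives $c_0\equiv1$.
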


We also recall the Quintuple Product Identity referred to above.
\begin{theorem}[\cite{C06}]
\label{qpt1}
For $|q|<1$ and $z \not = 0$, one has
    \(
    \label{qpideq}
    \begin{aligned}
        Q(z,q) 
        = \sum_{n=-\infty}^{\infty}q^{(3n-1)n/2}z^{3n}(1-zq^n)
        &= \<z;q\>_{\infty}(qz^2,q/z^2;q^2)_{\infty} \\
        & = \<-qz^3;q^3\>_{\infty} - z\<-q^2z^3;q^3\>_{\infty}
    \end{aligned}
    \)
\end{theorem}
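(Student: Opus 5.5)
The plan is to prove the three equalities in \eqref{qpideq} separately: a purely product-side rearrangement, a functional-equation argument for the bilateral series, and finally a $3$-dissection of that series using Theorem \ref{jtpt}.

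\textbf{First product form.} I use the factorizations $(w;q)_\infty=(w;q^2)_\infty(wq;q^2)_\infty$ and $(z^2;q^2)_\infty=(z,-z;q)_\infty$. They give
\[
(z^2;q)_\infty=(z,-z;q)_\infty(qz^2;q^2)_\infty,\qquad (q/z^2;q)_\infty=(q/z,-q/z;q)_\infty(q/z^2;q^2)_\infty .
\]
Hence $\<z^2;q\>_\infty=(z,-z,q/z,-q/z,q;q)_\infty(qz^2,q/z^2;q^2)_\infty$. Dividing by $(-z,-q/z;q)_\infty$ leaves $\<z;q\>_\infty(qz^2,q/z^2;q^2)_\infty$, which is the first product form of $Q(z,q)$.

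\textbf{The bilateral series.} Set $f(z)=\<z;q\>_\infty(qz^2,q/z^2;q^2)_\infty$. It is analytic in $0<|z|<\infty$, so it has a Laurent expansion $\sum_k c_kz^k$ with $c_k=c_k(q)$.

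From $\<qz;q\>_\infty=-z^{-1}\<z;q\>_\infty$ and
\[
\frac{(q^3z^2,q^{-1}z^{-2};q^2)_\infty}{(qz^2,qz^{-2};q^2)_\infty}=-q^{-1}z^{-2}
\]
we get $f(qz)=q^{-1}z^{-3}f(z)$, and therefore $c_{k+3}=q^{k+1}c_k$. From $\<1/z;q\>_\infty=-z^{-1}\<z;q\>_\infty$ and the symmetry of the $q^2$-factor under $z\to1/z$ we get $f(1/z)=-z^{-1}f(z)$, which gives $c_j=-c_{1-j}$.

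These relations give $c_1=-c_0$ and $c_2=-c_{-1}$, while the recurrence with $k=-1$ gives $c_2=c_{-1}$. Thus $c_2=0$, and hence every $c_{3n+2}=0$. Iterating the recurrence then shows
\[
f(z)=c_0(q)\sum_n q^{(3n-1)n/2}z^{3n}(1-zq^n).
\]

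\textbf{Normalizing $c_0$.} I expect this to be the main obstacle, and I will fix $c_0$ by evaluating at $z=-1$.

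On the product side, $f(-1)=2(-q;q)_\infty^2(q;q)_\infty(q;q^2)_\infty^2=2(q;q)_\infty$, using Euler's identity $(-q;q)_\infty(q;q^2)_\infty=1$.

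On the series side, $\sum(-1)^nq^{(3n-1)n/2}(1+q^n)=2\sum(-1)^nq^{n(3n-1)/2}$. The symmetry $n\to-n$ identifies the sum of the $q^{n(3n+1)/2}$ terms with the sum of the $q^{n(3n-1)/2}$ terms. The last sum is the pentagonal number series, which equals $(q;q)_\infty$ by \eqref{eq:TripleProduct} with $q\to q^3$ and $z=q$, since $\<q;q^3\>_\infty=(q;q)_\infty$. Since $(q;q)_\infty\neq0$, this gives $c_0=1$.

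\textbf{Last form.} I split $\sum_n q^{(3n-1)n/2}z^{3n}(1-zq^n)$ into its two pieces.

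The piece $\sum_n q^{(3n-1)n/2}z^{3n}$ equals $\sum_n(-w)^n q^{3n(n-1)/2}$ with $w=-qz^3$, and \eqref{eq:TripleProduct} with base $q^3$ turns it into $\<-qz^3;q^3\>_\infty$.

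The piece $-z\sum_n q^{(3n+1)n/2}z^{3n}$ is handled the same way with $w=-q^2z^3$, and gives $-z\<-q^2z^3;q^3\>_\infty$. This completes \eqref{qpideq}.
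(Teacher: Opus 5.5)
Your proof is correct. The paper itself does not prove this statement; it quotes it from Cooper's survey \cite{C06}, so there is no in-paper argument to compare with. What you give is essentially the classical functional-equation proof, made self-contained using only Theorem~\ref{jtpt} and Euler's identity $(-q;q)_\infty(q;q^2)_\infty=1$.

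The steps all check out:
\begin{itemize}
\item $f(qz)=q^{-1}z^{-3}f(z)$ and $f(1/z)=-z^{-1}f(z)$ give $c_{k+3}=q^{k+1}c_k$ and $c_j=-c_{1-j}$.
\item These force $c_{3n+2}=0$, $c_{3n}=q^{n(3n-1)/2}c_0$ and $c_{3n+1}=-q^{n(3n+1)/2}c_0$.
\item Both sides equal $2(q;q)_\infty$ at $z=-1$, so $c_0=1$.
\end{itemize}

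Two small points deserve a sentence each. First, running the recurrence backwards divides by powers of $q$, so dispose of $q=0$ separately; there every expression reduces to $1-z$. Second, the defining quotient of $Q(z,q)$ has the form $0/0$ at $z\in -q^{\mathbb Z}$, in particular at $z=-1$. So your first step is an identity of meromorphic functions with removable singularities. This is also why normalizing with the product $f$, rather than with $Q$, at $z=-1$ is the right choice.

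Finally, the paper's own use of this theorem needs only your first, purely elementary paragraph. The proof of Theorem~\ref{mainthm} rests on $Q(z,q)=\langle z;q\rangle_\infty(qz^2,q/z^2;q^2)_\infty$ together with \eqref{eq:TripleProduct}. The bilateral series and the $q^3$-product forms enter only in the superlacunarity remark.
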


    Throughout this paper, for fixed $p$, ``overbarred'' quantities (such as $\bar{m}$) indicate multiplicative inverses (mod $p$); for instance, if  $(m,p)=1$, then $\bar{m}$ is a (fixed) solution to the congruence 
    \[
        m\bar{m} \equiv 1 \pmod{p}.
    \]

\begin{theorem}\label{mainthm}
    Let $Q(z,q)$ be as at \eqref{qtpzqdef}, let $p \equiv 1 \pmod 4$ be a prime, and $m>0$, $n>0$ be integers such that $p=m^2+n^2$. Let $b>0$ be an integer, and let the sequence $a_t$ be defined by 
        \(\label{mainthmeq1}
            Q(q^{bm},q^p)Q(q^{bn},q^p) = \sum_{t=0}^{\infty} a_t q^t.
        \)
 Let
        \[
            w \equiv \bar{2}(m+n) \pmod{p}.
        \]       
\begin{enumerate}[label={\normalfont (\arabic*)}]
\item     If $p \equiv 1 \pmod{12}$, 
   one has
        \(\label{mainthmeq2}
            a_{pt+wb} = a_{pt+(w-3b)b} = 0  \qquad (\text{for all integers $t$}).
        \)
\item \label{thm:item2} 
    If $p \equiv 5 \pmod{12}$,  one has
        \(\label{mainthmeq3}
            a_{pt+(1-3b\bar{m})wb} = a_{pt+(1-3b\bar{n})wb} = 0 \qquad (\text{for all integers $t$}).
        \)
\end{enumerate}
\end{theorem}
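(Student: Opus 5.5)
Write each factor with the series form of the Quintuple Product Identity (Theorem \ref{qpt1}):
\[
Q(z,q)=\sum_{k}q^{(3k-1)k/2}z^{3k}-\sum_k q^{(3k-1)k/2}z^{3k+1}q^{k}.
\]
With $z=q^{bm}$, $q\to q^p$, and similarly for $n$, the product $Q(q^{bm},q^p)Q(q^{bn},q^p)$ becomes a signed double sum over $(j,k)\in\mathbb{Z}^2$. It splits into four pieces according to whether each factor contributes its ``$+$'' or ``$-$'' term. In each piece the exponent of $q$ is a quadratic polynomial $E(j,k)=p\bigl(\tfrac{3j^2-j}{2}+\tfrac{3k^2-k}{2}\bigr)+3b(mj+nk)+\epsilon$, where $\epsilon\in\{0,\,bm+pj,\,bn+pk,\ldots\}$ is the shift coming from the $-z q^{k}$ term. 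It is cleaner to use the symmetric form $\sum_k (-1)^{\delta} q^{p(3k^2+\ldots)/2}z^{3k+\delta}$ with $\delta\in\{0,1\}$, i.e.\ to sum over exponents $r\equiv 0,1\pmod 3$ of $z$ with sign $(-1)^{r \bmod 3}$. Then each factor equals $\sum_{r\not\equiv 2 \ (3)}\pm q^{p\,g(r)}z^{r}$ with $g$ an explicit quadratic in $r$ (essentially $(r^2-r)/6$ up to adjustment). The coefficient $a_N$ is therefore a signed count of lattice points $(r,s)$, with $r,s\not\equiv2\pmod 3$, satisfying $p(g(r)+g(s))+b(mr+ns)=N$.

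Next I reduce modulo $p$. The condition $N\equiv \alpha\pmod p$ forces $b(mr+ns)\equiv\alpha\pmod p$. The plan is to find an involution $\iota$ on the set of such lattice points which preserves $N$ exactly (not just mod $p$), has no fixed points on the relevant progression, and reverses the sign. This gives $a_{pt+\alpha}=0$. The natural candidate is a rotation by $90^\circ$ adapted to $p=m^2+n^2$. The map $(r,s)\mapsto(r',s')$ with $r'+\text{const}=\pm\frac{n\,(\ldots)-m\,(\ldots)}{\cdot}$, or more concretely the reflection
\[
(x,y)\mapsto \frac{1}{p}\bigl((n^2-m^2)x-2mny,\;-2mnx+(m^2-n^2)y\bigr)
\]
in the line through $(m,n)$, preserves $x^2+y^2$. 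After completing the square, $N$ is a function of $(6r-1)^2+(6s-1)^2$ together with the linear term $mr+ns$. Completing the square jointly, $N$ depends on $(r+\beta m)^2+(s+\beta n)^2$ for a suitable $\beta$ (involving $b$ and $\bar p$-type denominators). Any orthogonal map fixing the vector $(m,n)$ direction preserves this quantity, so the reflection across the line perpendicular to $(m,n)$ is the one to use. The residue $w\equiv\bar2(m+n)$ should arise as the value of $mr+ns$ on the fixed line of the reflection. Integrality of the image forces $mr+ns\equiv$ a specific class mod $p$, using $m^2\equiv -n^2$, i.e.\ $m\bar n$ is a square root of $-1$. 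The two cases $p\equiv1,5\pmod{12}$ enter through the congruence conditions mod $3$: when $3\mid p-1$, one must check that the involution preserves $r,s\not\equiv2\pmod3$ and flips the sign $(-1)^{r+s}$-type parity. Since $m^2+n^2\equiv 1\pmod 3$, exactly one of $m,n$ is divisible by $3$, and I would split into those subcases. This yields the two progressions $wb$ and $(w-3b)b$, from the two choices of which factor carries the $-z q^k$ shift, or in case (2) the twisted residues $(1-3b\bar m)wb$ and $(1-3b\bar n)wb$.

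The main obstacle is choosing the involution so that it is integral, preserves the mod-$3$ constraints and the exact exponent, and changes the sign. I expect to first work out the case $b=1$, $p=5$ $(m,n)=(1,2)$ and $p=13$ numerically to pin down the exact affine map. I would then verify that it has no fixed points on the target class. A fixed point would give a term that cannot cancel, and this is where the specific residue $\alpha$ must be used. Finally I would check the sign reversal via the parity of $r+s$ or of $\lfloor r/3\rfloor$-type exponents.
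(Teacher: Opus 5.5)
Your reduction to a two-dimensional lattice sum is correct, and it is a genuinely different route from the paper's. The paper multiplies by $(q^{2p};q^{2p})_\infty^2$, expands as a four-fold triple-product sum, and uses computer-found affine involutions of $\mathbb{Z}^4$. You instead have $Q(q^{bm},q^p)Q(q^{bn},q^p)=\sum \chi(r)\chi(s)q^{N(r,s)}$, summed over $r,s\not\equiv 2\pmod{3}$, with $\chi(0)=1$, $\chi(1)=-1$ on residues mod $3$ and $N=\frac p6\bigl(r(r-1)+s(s-1)\bigr)+b(mr+ns)$.

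However, the proposal never writes down or checks the involution, which is the entire content of the proof, and several guiding statements are wrong. Completing the square gives $N=\frac p6|x-c|^2+\mathrm{const}$ with center $c=(\tfrac12,\tfrac12)-\tfrac{3b}{p}(m,n)$. So $N$ is not a function of $(r+\beta m)^2+(s+\beta n)^2$. The admissible maps are isometries fixing $c$, not ``orthogonal maps fixing the $(m,n)$ direction''. To preserve the coset $mr+ns\equiv\mathrm{const}\pmod{p}$, their linear part must be one of the eight symmetries of the square lattice $\mathbb{Z}(m,n)\oplus\mathbb{Z}(-n,m)$.

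Nor do the two progressions in part (1) come from ``which factor carries the shift''; they come from two different reflections.
\begin{itemize}
\item The reflection in the line through $c$ parallel to $(m,n)$ is $(r,s)\mapsto(r-kn,\,s+km)$ with $k=\bigl(2(nr-ms)-(n-m)\bigr)/p$. Using $nr-ms\equiv -m\bar n(mr+ns)$, $k$ is integral exactly when $mr+ns\equiv w$.
\item The reflection in the perpendicular line through $c$ is $(r,s)\mapsto(r-km,\,s-kn)$ with $k=\bigl(2(mr+ns)-(m+n)+6b\bigr)/p$. It is integral exactly when $mr+ns\equiv w-3b$.
\end{itemize}
If $p\equiv1\pmod{3}$ and, say, $3\mid m$, these act mod $3$ as $r\mapsto1-r$ and $s\mapsto 1-s$ respectively. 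Hence they flip $\chi(r)\chi(s)$, which gives $a_{pt+bw}=a_{pt+(w-3b)b}=0$.

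The real gap is part (2). You give no mechanism for the residues $(1-3b\bar m)wb$ and $(1-3b\bar n)wb$, and the reflection you single out (perpendicular to $(m,n)$) fails there. When $p\equiv2\pmod{3}$, so $m,n\equiv\pm1$, both reflections above act on residues mod $3$ as $(r,s)\mapsto(s,r)$ or $(r,s)\mapsto(1-s,1-r)$. Both preserve $\chi(r)\chi(s)$, so nothing cancels.

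The missing idea is to use the diagonal symmetries of the square lattice. Take the reflection through $c$ with normal $(m-n,m+n)$, namely $(r,s)\mapsto\bigl(r-k(m-n),\,s-k(m+n)\bigr)$ with $k=\bigl((m-n)r+(m+n)s-m+3b\bigr)/p$.
\begin{itemize}
\item Integrality: $(m-n)r+(m+n)s\equiv\bar n(m+n)(mr+ns)$ and $\bar n(m+n)\,w(1-3b\bar m)\equiv m-3b\pmod{p}$. So $k$ is an integer exactly on the class $mr+ns\equiv w(1-3b\bar m)$.
\item Sign reversal: modulo $3$ the map is $s\mapsto1-s$ if $3\mid m-n$, and $r\mapsto1-r$ if $3\mid m+n$.
\end{itemize}
Interchanging $m$ and $n$ (normal $(m+n,n-m)$) gives the progression with $\bar n$. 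You also need the remark that the product vanishes identically when $p\mid b$, since $(q^{p-2bm};q^p)_\infty$ then contains the factor $1-q^0$. With these four explicit reflections, your approach becomes a complete, hand-checkable proof, much shorter than the paper's. As written it is only a plan, and its one concrete suggestion fails in case (2).
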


In a previous paper \cite{DHMcLY26}, the present authors determined the $p$-dissection of the infinite product on the left side of \eqref{mainthmeq1} explicitly, and showed that the components of the $p$-dissection corresponding to the arithmetic progressions with vanishing coefficients vanished identically (due to the appearance of infinite products of the form $(1;q^{p^2})_{\infty}$). In that paper we also showed that in those arithmetic progressions where the coefficients do not all vanish,  the sign of those coefficients was constant, and we determined those signs explicitly. Some combinatorial interpretations of the analytic identities were also given.

In the present paper we give a proof of the vanishing coefficient results in Theorem \ref{mainthm}, which is fundamentally different from the one given in~\cite{DHMcLY26}. This time we work with the product $Q(q^{bm},q^p)Q(q^{bn},q^p) (q^{2p};q^{2p})_{\infty}^2$ (the additional factor $(q^{2p};q^{2p})_{\infty}^2$ does not affect the vanishing of coefficients in arithmetic progressions modulo $p$). This time we use the Jacobi triple product identity (cf. Theorem~\ref{jtpt}) to write 
\begin{multline*}
   Q(q^{bm},q^p)Q(q^{bn},q^p)(q^{2p};q^{2p})_{\infty}^2
        = \<q^{bm};q^p\>_{\infty} \<q^{bn};q^p\>_{\infty}\<q^{p-2bm};q^{2p}\>_{\infty} \<q^{p-2bn};q^{2p}\>_{\infty}\\
        =\sum_{r,s,u,v\in \Z}(-1)^{F_{1}(r,s,u,v)}q^{G_{1}(r,s,u,v)},
\end{multline*}
where
    \begin{align}
         F_{1}(r,s,u,v) &= r+s+u+v, \notag\\
         G_{1}(r,s,u,v) &= b m u+\frac{1}{2} p u(u-1) +b n v+\frac{1}{2} p v(v-1)\notag \\
         &\phantom{adadsa}+
         r (p-2 b m)+p r(r-1) +s (p-2 b n)+p s(s-1).
    \end{align}
We next restrict the last sum to contain only terms in which the exponent of $q$ is congruent to $b\tau$ modulo~$p$ with $\tau$ representing one of the four residue classes $w$, $w-3b$, $(1-3b\bar m)w$, $(1-3b\bar n)w$ that appear in the arithmetic progressions with vanishing coefficients in Theorem~\ref{mainthm}. This can be achieved as follows. If
\begin{align*}
   G_{1}(r,s,u,v)& \equiv b \tau \pmod{p} \\
  \lra b m u+b n v -2 b m r-2 b n s& \equiv b \tau \pmod{p} \\
   \lra  m u+ n v -2  m r-2  n s& \equiv  \tau \pmod{p} \\
   \lra v&\equiv 2 m r \bar{n}-m u \bar{n}+\tau \bar{n}+2 s\pmod{p}.
\end{align*}
 Hence if one makes the substitution 
\begin{equation}\label{vsubeq}
  v\to 2 m r \bar{n}-m u \bar{n}+\tau \bar{n}+2 s + p v
\end{equation}
in $F_{1}(r,s,u,v)$ and $G_{1}(r,s,u,v)$, then the new sum will contain exactly those terms in which the exponent of $q$ is congruent to $b \tau$ modulo~$p$.

Suppose this restricted sum is represented as 
 \[
        \sum_{(r,s,u,v) \in \Z^{4}} (-1)^{F_{2}(r,s,u,v)}q^{G_{2}(r,s,u,v)},
    \]
where the aim is now to show that this sum is identically zero.

Alternatively, writing 
\begin{equation}\label{grsuveq}
 g(r,s,u,v) := \frac{G_{2}(r,s,u,v)-b\tau}{p}
\end{equation}
(which corresponds to dividing the previous sum by $q^{b\tau}$ and then replacing $q$ by $q^{1/p}$), so that the aim now becomes showing that the series
\begin{equation}\label{F2gsum}
 \sum_{(r,s,u,v) \in \Z^{4}} (-1)^{f(r,s,u,v)}q^{g(r,s,u,v)}=:\sum_{N}c_Nq^N
\end{equation}
vanishes identically (for consistency of appearance we have used $f(r,s,u,v):=F_{2}(r,s,u,v)$). To make the discussion which comes next, which involves matrices and vectors, take up less space, we define 
 \[
        \bfx := (r,s,u,v).   
    \]

The vanishing of the sum \eqref{F2gsum} is achieved by showing that for each of the four arithmetic progressions in Theorem \ref{mainthm} there exists a  matrix $\bfA \in \Z^{4 \times 4}$ and vector $\bfb \in \Z^{4}$, such that the map 
    \begin{equation}\label{bfmap}
        \bfx \longmapsto \bfx' := \bfA\bfx+\bfb,
    \end{equation}
 the  matrix $\bfA$ and the vector $\bfb$  have the following properties:
 \begin{align}
  \bfA^{2} - \bfI & = 0,\label{bfAbfbprops1} \\
      (\bfA+\bfI)\bfb &= 0\label{bfAbfbprops2}\\
 g(\bfx') &= g(\bfx), \label{bfAbfbprops3}\\
  f(\bfx') &\equiv f(\bfx) + 1 \pmod{2}.\label{bfAbfbprops4}
 \end{align}
    Note that the first two properties show that the map at \eqref{bfmap} is an involution, since 
    \[
      (\bfx')'= \bfA( \bfA\bfx+\bfb)+\bfb =  \bfA^2 \bfx +(\bfA+\bfI)\bfb =\bfx.
    \]
The second pair of properties show  that for any integer $N$, the terms that contribute to the term $c_Nq^N$ at    
  \eqref{F2gsum} can be grouped together in pairs $((-1)^{f(\bfx)}q^{g(\bfx)}, (-1)^{f(\bfx')}q^{g(\bfx')})$ of equal exponent in $q$ but of opposite sign, so that the entire sum vanishes, giving the result. 
  
  Once the matrices $\bfA$ and the corresponding vectors $\bfb$ are found, verifying that the stated properties hold is a straightforward computation (although most easily done with a computer algebra system such as \emph{Mathematica}). Finding the form of the matrices $\bfA$ and the  vectors $\bfb$ involved finding these for enough primes and corresponding $\tau$ to guess at the general form (which then can be verified, as indicated). We discuss this experimentation below.


\section{Numerical Investigations}
Initially, for various primes $p$ we considered the series expansion of products of the form $Q(q^{j},q^p)Q(q^{k},q^p)=:\sum_{t=0}^{\infty} a_t^{(i,j)} q^t$, for $1 \leq j, k \leq p-1$ and looked for arithmetic progressions modulo $p$ in which all the coefficients vanished.

It soon became apparent that this vanishing coefficient phenomenon occurred only for primes $p \equiv 1 \pmod{4}$. Further investigation revealed that it occurred for pairs of exponents $(j,k)$ of the form  $(j,k)=(bm,bn)$, where $p=m^2+n^2$ and $b\in \N$. These investigations suggested that in this situation, $a_{pt+cb^2+db}^{(bm,bn)}=0$ for all integers $t$ and some pair of integers $c$ and $d$ and also allowed us to guess formulae for $c$ and $d$. In particular, for $p\equiv 1 \pmod{12}$ one pair was $(c,d)=(0,w)$, where $w=\bar{2}(m+n)\pmod{p}$.

One method of proof that occurred to us was the ``involution'' described above at  \eqref{bfmap}--\eqref{bfAbfbprops4}. For example, for a given prime $p\equiv 1 \pmod{12}$ one finds $m$ and $n$ such that $p=m^2+n^2$ and $3\mid m$ and selects one of the $\tau$ found experimentally, such as $\tau=w:=\bar{2}(m+n)$. Then using a computer algebra system such as \emph{Mathematica} one follows through the steps described in the introduction as far as computing $g(\bfx)$ as described at \eqref{grsuveq}.

Next, one sets up a matrix $\bfA$ with entries $a_{i,j}$ to be determined, and likewise a vector $\bfb$ with entries $b_k$ to be determined. One computes $\bfx' := \bfA\bfx+\bfb$ as at \eqref{bfmap} and then the difference $g(\bfx') - g(\bfx)$, which we wish to be identically zero. Regarded as a polynomial in $r$, $s$, $u$ and $v$, setting the coefficients of this polynomial equal to zero gives a system of fifteen quadratic equations in the $a_{i,j}$  and $b_k$ for which we seek integral solutions. 

Each of these equations may have, in some cases, hundreds or even thousands of solutions, so attempting to solve all fifteen equations simultaneously with a single \textbf{Solve[\,]} command is not practical, time-wise. Our initial method was to solve the system one equation at a time (with some preliminary testing to determine which equations are most easily solvable), then once a quadratic equation has been solved, each solution is fed back into the system of equations, which has the effect of making some of the quadratic equations linear, and hence easily solvable for one variable in terms of the other variables. One then feeds these solutions into  the system of equations, moves on to solve another quadratic equation and repeats the process with the resulting linear equations.

With careful management the method will eventually produce two solutions in a reasonable amount of time, one being the trivial solution ($\bfA=\bfI$, the identity matrix, and $\bfb=\bf0$) and the other being the ``involution'' solution we sought. However, the method was speeded up when one of us made the observation that $g(\bfx)$ may be represented as 
\begin{equation}\label{eq:g(x)MatForm}
        g(\bfx) = \bfx^{\tr}\bfQ\bfx + \bfz^{\tr}\bfx + c,
\end{equation}
for some matrix $\bfQ$, vector $\bfz$ and integer $c$. Then $g(\bfx') - g(\bfx) =0$ implies that 
\[
\bfA^{\tr}\bfQ\bfA = \bfQ,
\]
or that $\bfA$ is in the automorphism group of $\bfQ$. There are commands, such as \textbf{qfauto(\,)} in PARI/GP, that will compute the generators of the automorphism group. These can be imported back into \emph{Mathematica} and the full automorphism group generated. One then cycles through the full automorphism group, letting $\bfA$ be each of these matrices in turn, giving numerical values for the $a_{i,j}$ in the fifteen equations mentioned above, and then the remaining equations solved for the $b_k$. 

Regardless of the method used, one then generates sufficient numerical data to identify the various sub-cases in which the matrices $\bfA$ and vectors $\bfb$ have different forms, and sufficient data to identify these forms. What follows is a somewhat idealized description of how experimentation led to the formulae for $\bfA$ and $\bfb$ in one case. We write ``idealized'' as this outline is a result of hindsight, and in reality the progress from experimentation to formulae was less straightforward and involved more experimentation and guesswork. One reason for this, for example, was that for $p=13$ and $\tau =9$ the formula for $v$ at \eqref{vsubeq} gives 
\[
v\to 42 r+2 s-21 u+13 v+63,
\]
whereas our initial experiments reduced coefficients modulo 13 and used instead 
\[
v\to 3 r+2 s+5 u+13 v+11,
\]
which led to different solutions for $\bfA$ and $\bfb$ than those in Table \ref{tab1}, so that the pattern was not nearly so obvious. 

{\allowdisplaybreaks
\begin{center}
\begin{longtable}{C|C|C|C|C|C|C}\caption{The computation of the matrix $\bfA$ and the vector $\bfb$ for various primes $p\equiv 1 \pmod{12}$ and $\tau=w = \bar{2}(m+n)$.}\label{tab1}\\
p&m&n&w&\bfA&\bfb& (f(\bfx')-f(\bfx))\pmod{2}\\
\hline
&&&&&& \\
 13 & 3 & 2 & 9 & \left(
\begin{array}{cccc}
 -13 & 0 & 6 & -4 \\
 20 & 1 & -10 & 6 \\
 12 & 0 & -7 & 4 \\
 60 & 0 & -30 & 19 \\
\end{array}
\right) & \left(
\begin{array}{c}
-19\\29\\20\\87
\end{array}
\right)  & 1 \\
&&&&&& \\
37 & 6 & 1 & 22 & \left(
\begin{array}{cccc}
 -1 & 0 & 0 & -4 \\
 8 & 1 & -4 & 24 \\
 0 & 0 & -1 & 4 \\
 0 & 0 & 0 & 1 \\
\end{array}
\right) &\left(
\begin{array}{c}
-2\\14\\3\\0
\end{array}
\right)  & 1 \\
&&&&&& \\
61 & 6 & 5 & 36 & \left(
\begin{array}{cccc}
 -193 & 0 & 96 & -20 \\
 232 & 1 & -116 & 24 \\
 192 & 0 & -97 & 20 \\
 2784 & 0 & -1392 & 289 \\
\end{array}
\right) & \left(
\begin{array}{c}
-578\\694\\579\\8328
\end{array}
\right)  & 1 \\
&&&&&& \\
73 & 3 & 8 & 42 & \left(
\begin{array}{cccc}
 -85 & 0 & 42 & -16 \\
 32 & 1 & -16 & 6 \\
 84 & 0 & -43 & 16 \\
 672 & 0 & -336 & 127 \\
\end{array}
\right) & \left(
\begin{array}{c}
-589\\221\\590\\4641
\end{array}
\right)  & 1 \\
&&&&&& \\
97 & 9 & 4 & 55 & \left(
\begin{array}{cccc}
 -325 & 0 & 162 & -24 \\
 732 & 1 & -366 & 54 \\
 324 & 0 & -163 & 24 \\
 6588 & 0 & -3294 & 487 \\
\end{array}
\right) & \left(
\begin{array}{c}
-993\\2235\\994\\20115
\end{array}
\right)  & 1 \\
&&&&&& \\
 109 & 3 & 10 & 61 & \left(
\begin{array}{cccc}
 -13 & 0 & 6 & -20 \\
 4 & 1 & -2 & 6 \\
 12 & 0 & -7 & 20 \\
 12 & 0 & -6 & 19 \\
\end{array}
\right) & \left(
\begin{array}{c}
-123\\37\\124\\111
\end{array}
\right)  & 1 \\
\end{longtable}
\end{center}
}

\emph{Mathematica} was used to generate the matrices $\bfA$ and the vectors $\bfb$ in Table \ref{tab1}, for various primes $p\equiv 1 \pmod{12}$ and $\tau=w = \bar{2}(m+n)$. As it turns out, these are exactly what one gets from the formulae for $\bfA$ and  $\bfb$ in \textbf{Case 1} of the proof of Theorem \ref{mainthm}. Of course one does not have those formulae initially and one next proceeds to try to determine them from consideration of commonalities in the matrices and vectors in Table \ref{tab1}. Most obvious of course is column 2 of the matrices. One also observes other common features such as $a_{1,4}=-a_{3,4}=-2mn/3$, etc., etc. Proceeding in this manner and attempting to solve the general system of equations with these clues, one eventually determines the forms for $\bfA$ and  $\bfb$ stated in \textbf{Case 1} of the proof of Theorem \ref{mainthm}. The forms for $\bfA$ and  $\bfb$ stated in the other cases of the proof of Theorem \ref{mainthm} were derived similarly.

\section{Proof of Theorem \ref{mainthm} }
In this section we give our proof of Theorem \ref{mainthm}. For convenience, we restate this theorem.

\begin{manualtheorem}{\textbf{\ref{mainthm}}}
    Let $Q(z,q)$ be as at \eqref{qtpzqdef}, let $p \equiv 1 \pmod 4$ be a prime, let $m>0$, $n>0$ be integers such that $p=m^2+n^2$. Let $b>0$ be an integer, and let the sequence $a_t$ be defined by 
        \(
            Q(q^{bm},q^p)Q(q^{bn},q^p) = \sum_{t=0}^{\infty} a_t q^t.
        \)
    Let
        \[
            w \equiv \bar{2}(m+n) \pmod{p}.
        \]
\begin{enumerate}[label={\normalfont (\arabic*)}]
\item%
    If $p \equiv 1 \pmod{12}$, one has
        \[
            a_{pt+bw} = a_{pt+(w-3b)b} = 0 \qquad (\text{for all integers $t$}).
        \]
\item%
    If $p \equiv 5 \pmod{12}$, one has
        \[
            a_{pt+(1-3b\bar{m})bw} = a_{pt+(1-3b\bar{n})bw} = 0 \qquad (\text{for all integers $t$}).
        \]
\end{enumerate}
\end{manualtheorem}

\begin{proof}
 We continue with the notation introduced in the introduction, and following the discussion in the introduction, all that needs to be done is to state the form of the matrices $\bfA$ and $\bfb$ in each of the four cases, verify that the entries are integers, and that the properties listed at \eqref{bfAbfbprops1}--\eqref{bfAbfbprops4} are satisfied. The latter is most easily achieved using a computer algebra system such as \emph{Mathematica}, and a notebook that does this is available at the web site of one of the authors \cite{McL}.

\noindent\textbf{Case 1:} Let $p \equiv 1 \pmod{12}$ and let $\tau = w = \bar{2}(m+n)$. As $p=m^2+n^2$ by assumption, it is evident that one, and only one, of $m$ and $n$ must be equivalent to $0$ (mod $3$). Thus, suppose that $3\,|\, m$.
    
In this case we have
    \begin{align*}
    \bfA & = 
    {\displaystyle\left(
\begin{array}{cccc}
 -\frac{4 m^2 \left(n \bar{n}-1\right)}{3 p}-1 & 0 & \frac{2 m^2 \left(n \bar{n}-1\right)}{3 p} & -\frac{2 m n}{3}  \\
 \frac{4 m \bar{n}}{3}-\frac{4 m n \left(n \bar{n}-1\right)}{3 p} & 1 & \frac{2 m n \left(n \bar{n}-1\right)}{3 p}-\frac{2 m \bar{n}}{3} & \frac{2 m^2}{3} \\
 \frac{4 m^2 \left(n \bar{n}-1\right)}{3 p} & 0 & -\frac{2 m^2 \left(n \bar{n}-1\right)}{3 p}-1 & \frac{2 m n}{3} \\
 \frac{4 m \left(n \bar{n}-1\right) \left(p \bar{n}-n \left(n \bar{n}-1\right)\right)}{p^2} & 0 & -\frac{2 m \left(n \bar{n}-1\right) \left(p \bar{n}-n \left(n \bar{n}-1\right)\right)}{p^2} & \frac{2 m^2 \left(n \bar{n}-1\right)}{p}+1 \\
\end{array}
\right)},
    \end{align*}
and
    \begin{align*}
    \bfb &= 
       \left(
\begin{array}{c}
 \frac{m \left(-2 n w \bar{n}+m+n\right)}{3 p} \\
 \frac{m \left(2 m w \bar{n}-m+n\right)}{3 p} \\
 1-\frac{m \left(-2 n w \bar{n}+m+n\right)}{3 p} \\
 \frac{m \left(n \bar{n}-1\right) \left(2 m w \bar{n}-m+n\right)}{p^2} \\
\end{array}
\right).
    \end{align*}
That the various matrix- and vector entries are integers follows from the facts that $3\mid m$, $w \equiv \bar{2}(m+n)\pmod{p}$ and $n\bar{n}\equiv 1 \pmod{p}$, together with the congruence $2 m w \bar{n} \equiv m-n \pmod{p}$, which easily follows from $w \equiv \bar{2}(m+n)\pmod{p}$.
    
\vspace{\baselineskip}

\noindent\textbf{Case 2:} Here also $p \equiv 1 \pmod{12}$, and this time  $\tau =w-3b$; we again take $3\mid m$. 
    \begin{align*}
    \bfA &= \left(
\begin{array}{cccc}
 \frac{4 m^2 \left(n \bar{n}-1\right)}{3 p}+1 & 0 & -\frac{2 m^2 \left(n \bar{n}-1\right)}{3 p} & \frac{2 m n}{3} \\
 \frac{4 m n \left(n \bar{n}-1\right)}{3 p}-\frac{4 m \bar{n}}{3} & -1 & \frac{2 m \bar{n}}{3}-\frac{2 m n \left(n \bar{n}-1\right)}{3 p} & -\frac{2 m^2}{3} \\
 -\frac{4 m^2 \left(n \bar{n}-1\right)}{3 p} & 0 & \frac{2 m^2 \left(n \bar{n}-1\right)}{3 p}+1 & -\frac{2 m n}{3} \\
 -\frac{4 m \left(n \bar{n}-1\right) \left(p \bar{n}-n \left(n \bar{n}-1\right)\right)}{p^2} & 0 & \frac{2 m \left(n \bar{n}-1\right) \left(p \bar{n}-n \left(n \bar{n}-1\right)\right)}{p^2} & -\frac{2 m^2 \left(n \bar{n}-1\right)}{p}-1 \\
\end{array}
\right),
    \end{align*}
and
    \begin{align*}
    & \bfb =         \left(
\begin{array}{c}
 -\frac{m \left(-2 n \bar{n} (w-3 b)-6 b+m+n\right)}{3 p} \\
 -\frac{2 m^2 \bar{n} (w-3 b)-6 b n-m^2+m n}{3 p} \\
 \frac{m \left(-2 n \bar{n} (w-3 b)-6 b+m+n\right)}{3 p} \\
 -\frac{\left(m^2 \bar{n}+n\right) \left(2 n \bar{n} (w-3 b)+6 b-m-n\right)}{p^2} \\
\end{array}
\right).
    \end{align*}
 That the various matrix- and vector entries are integers follow from similar considerations to those in \textbf{Case 1}, this time also using $m^2\equiv -n^2 \pmod{p}$.   
\begin{remark}
    Note that the matrix $\bfA$ in \textbf{Case 2} is the negative of the matrix $\bfA$ in \textbf{Case 1}.
\end{remark}

Before coming to primes $p \equiv 5 \pmod{12}$ we note that the division into cases is different. Instead of the two cases being derived from the vanishing in the two different arithmetic progressions, as in the case of primes $p \equiv 1 \pmod{12}$, we note that if  $p \equiv 5 \pmod{12}$, $p=m^2+n^2$, then either $3\mid (m-n)$ (\textbf{Case 3}) or 
$3\mid (m+n)$ (\textbf{Case 4}). We also note that in either of these cases, it is sufficient to demonstrate the matrix and vector that shows vanishing in the arithmetic progression $a_{pt+(1-3b\bar{m})bw}$, since we can freely switch $m$ and $n$ to get vanishing in the arithmetic progression $a_{pt+(1-3b\bar{n})bw}$ (see the statement of Theorem \ref{mainthm}).

\vspace{\baselineskip}

\noindent\textbf{Case 3:} Let $p \equiv 5 \pmod{12}$ and $3\mid (m-n)$, and we consider $a_{pt+(1-3b\bar{m})bw}$, so $\tau = (1-3b\bar{m})w$. This time,
    \begin{align}\label{mainthmprf2case3}
        &\bfA=\left(
\begin{array}{cccc}
 \frac{2 m \bar{n} (m-n) (m+n)+4 m n+p}{3 p} & 0 & -\frac{(m-n) \left(m^2 \bar{n}+m n \bar{n}-m+n\right)}{3 p} & \frac{1}{3} (m-n) (m+n) \\
 -\frac{2 (m-n) \left(m^2 \bar{n}-m n \bar{n}+m+n\right)}{3 p} & -1 & \frac{(m-n) \left(m^2 \bar{n}-m n \bar{n}+m+n\right)}{3 p} & -\frac{1}{3} (m-n)^2 \\
 -\frac{2 (m-n) \left(m^2 \bar{n}+m n \bar{n}-m+n\right)}{3 p} & 0 & \frac{m \bar{n} (m-n) (m+n)+2 m n+2 p}{3 p} & -\frac{1}{3} (m-n) (m+n) \\
 a_{41}
  & 0 & 
 a_{43}
  & -\frac{m \left(m^2 \bar{n}-n^2 \bar{n}+2 n\right)}{p} \\
\end{array}
\right) \\
    &\bfb = \left(
\begin{array}{c}
 -\frac{(m-n) \left(w \bar{n} (m+n) \left(3 b \bar{m}-1\right)-3 b+m\right)}{3 p} \\
 \frac{w \bar{n} (m-n)^2 \left(3 b \bar{m}-1\right)+3 b (m+n)+n (n-m)}{3 p} \\
 \frac{(m-n) \left(w \bar{n} (m+n) \left(3 b \bar{m}-1\right)-3 b+m\right)}{3 p} \\
 \frac{\left(m \bar{n} (m-n)+m+n\right) \left(w \bar{n} (m+n) \left(3 b \bar{m}-1\right)-3 b+m\right)}{p^2} \\
\end{array}
\right),
    \end{align}
where
    \begin{align*}
        a_{41} & =-\frac{2 \left(m^2 \bar{n}-m n \bar{n}+m+n\right) \left(m^2 \bar{n}+m n \bar{n}-m+n\right)}{p^2},  \\
        a_{43} & =  \frac{\left(m^2 \bar{n}-m n \bar{n}+m+n\right) \left(m^2 \bar{n}+m n \bar{n}-m+n\right)}{p^2}.
    \end{align*}
The integrality of the terms follows by arguments similar to those stated previously, noting that in this case $3\mid 4 m n+p$.

\begin{remark}
    The matrix \textbf{A} above, and also in \textbf{Case 4} below, is displayed as shown because of the width of the displayed entries.

\end{remark}

\vspace{\baselineskip}

\noindent\textbf{Case 4:} Lastly, we consider $p \equiv 5 \pmod{12}$ and $3\mid (m+n)$, we again consider $a_{pt+(1-3b\bar{m})bw}$, so once more $\tau = (1-3b\bar{m})w$. In this case 
    \begin{align}\label{mainthmprf2case4}
    &\bfA = \left(
\begin{array}{cccc}
 \frac{2 m \bar{n} (m-n) (m+n)+4 m n-p}{3 p} & 0 & -\frac{(m+n) \left(m^2 \bar{n}-m n \bar{n}+m+n\right)}{3 p} & \frac{1}{3} (m-n) (m+n) \\
 \frac{2 (m+n) \left(m^2 \bar{n}+m n \bar{n}-m+n\right)}{3 p} & 1 & -\frac{(m+n) \left(m^2 \bar{n}+m n \bar{n}-m+n\right)}{3 p} & \frac{1}{3} (m+n)^2 \\
 -\frac{2 (m+n) \left(m^2 \bar{n}-m n \bar{n}+m+n\right)}{3 p} & 0 & \frac{m \bar{n} (m-n) (m+n)+2 m n-2 p}{3 p} & -\frac{1}{3} (m-n) (m+n) \\
a_{4,1}
 & 0 & 
 a_{4,3}
 & -\frac{m \left(m^2 \bar{n}-n^2 \bar{n}+2 n\right)}{p} \\
\end{array}
\right)\notag\\
    &\bfb= 
       \left(
\begin{array}{c}
 \frac{w \bar{n} \left(n^2-m^2\right) \left(3 b \bar{m}-1\right)+3 b (m-n)+n (m+n)}{3 p} \\
 -\frac{(m+n) \left(w \bar{n} (m+n) \left(3 b \bar{m}-1\right)-3 b+m\right)}{3 p} \\
 \frac{w \bar{n} (m-n) (m+n) \left(3 b \bar{m}-1\right)+3 b (n-m)-n (m+n)+3 p}{3 p} \\
 \frac{\left(m \bar{n} (m-n)+m+n\right) \left(w \bar{n} (m+n) \left(3 b \bar{m}-1\right)-3 b+m\right)}{p^2} \\
\end{array}
\right),
    \end{align}
    \normalsize
where
    \begin{align*}
        a_{4,1} & =
         -\frac{2 \left(m^2 \bar{n}-m n \bar{n}+m+n\right) \left(m^2 \bar{n}+m n \bar{n}-m+n\right)}{p^2} ,  \\
        a_{4,3} & =  \frac{\left(m^2 \bar{n}-m n \bar{n}+m+n\right) \left(m^2 \bar{n}+m n \bar{n}-m+n\right)}{p^2}.
    \end{align*}
 The integrality of the terms follows  similarly, noting that in this case $3\mid (4 m n-p)$.
\end{proof}

\section{Some Examples}
We next illustrate the results in Theorem \ref{mainthm} with some examples.

\begin{example}\label{exthem1}
(1) Let $p=37=6^2+1^2$ (so take $m=6$ and $n=1$), let $b>0$, and define the sequence $a_t$ by 
    \[
        Q(q^{6b},q^{37})Q(q^{b},q^{37})=\sum_{t=0}^{\infty}a_tq^t.
    \]
Since $37 \equiv 1 \pmod{12}$, we have 
    \[
        w \equiv \bar{2}(m+n) = \bar{2}(6+1) \equiv 22 \pmod{37}.
    \]
Then for all integers $t$,
    \[
        a_{37t+22b} = a_{37t+22b-3b^{2}} = 0.
    \]
(2) Let  $p=29=5^2+2^2$ (so take $m=5$ and $n=2$), let $b > 0$, and define the sequence $a_t$ by 
    \[
        Q(q^{5b},q^{29})Q(q^{2b},q^{29}) = \sum_{t=0}^{\infty}a_tq^t.
    \]
Since $29 \equiv 5 \pmod{12}$, 
{\allowdisplaybreaks
    \begin{align*}
        w \equiv \bar{2}(m+n) &\equiv 18 \pmod{29}, \\
         -3\bar{m}w &\equiv 24 \pmod{29}, \\
        -3\bar{n}w &\equiv 2 \pmod{29}.
    \end{align*}
}
Then for all integers $t$,
    \[
        a_{29t+24b^2+18b} = a_{29t+2b^2+18b} = 0.
    \]
\end{example}

\begin{remark} For any prime $p \geq 5$ with $p = m^2+n^2$, one has by \eqref{qpideq} that both $Q(q^{bm},q^p)$ and $Q(q^{bn},q^p)$ are \emph{superlacunary}, which Ono and Robins \cite{OR95} defined to mean that if either is expanded as a series in $q$, it has the form
    \[
        \sum_{n=-\infty}^{\infty} d(an^2+bn+c) q^{an^2+bn+c}
    \]
where $a,b,c$ are rational and $a>0$ (Ono and Robins take $a,b,c$  to be integers, since replacing $q$ with $q^N$ for some integer $N$ would clear all denominators). The authors point out in \cite{OR95} that the product of two superlacunary series is lacunary and hence the product $ Q(q^{bm},q^p)Q(q^{bn},q^p)$ has a series expansion that is lacunary, and almost all the coefficients $a_t$ in Theorem \ref{mainthm} are zero.

\end{remark}

However, multiplying $ Q(q^{bm},q^p)Q(q^{bn},q^p)$  by any function whose series expansion is a series in $q^p$ will not affect the vanishing of coefficients in arithmetic progressions modulo $p$, so we state the following corollary, which may have more interesting combinatorial implications on special partition functions.

\begin{corollary}\label{maincor}
    Let $Q(z,q)$ be as at \eqref{qtpzqdef}, let $p \equiv 1 \pmod 4$ be a prime, and $m>0$, $n>0$ be integers such that $p=m^2+n^2$. Let $b>0$ be an integer, let
        \[
            w\equiv \bar{2}(m+n)\pmod{p},
        \]
    and let the sequence $c_t$ be defined by 
        \(\label{maincoreq1}
            \frac{Q(q^{bm},q^p)Q(q^{bn},q^p)}{(q^p;q^p)_{\infty}^2}=\sum_{t=0}^{\infty} c_t q^t.
        \)
\begin{enumerate}[label={\normalfont (\arabic*)}]
\item%
    If $p \equiv 1 \pmod{12}$, then for all integers $t$ one has
        \(\label{maincoreq2}
            c_{pt+bw} = c_{pt+b(w-3b)} = 0.
        \)
\item%
    If $p \equiv 5 \pmod{12}$, then for all integers $t$ one has
        \(\label{maincoreq3}
            c_{pt+bw(1-3b\bar{m})} = c_{pt+bw(1-3b\bar{n})} = 0.
        \)
\end{enumerate}
\end{corollary}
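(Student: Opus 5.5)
The corollary follows from Theorem \ref{mainthm} by a dissection argument. The idea is that dividing by a function of $q^p$ cannot mix residue classes modulo $p$.

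Write $\sum_{t\ge0}a_tq^t=Q(q^{bm},q^p)Q(q^{bn},q^p)$ as in Theorem \ref{mainthm}. Also write
\[
\frac{1}{(q^p;q^p)_\infty^2}=\sum_{k\ge0}e_kq^{pk},
\]
which is a power series in $q^p$ alone (its coefficients count pairs of partitions, though we need no information about them). Then
\[
c_t=\sum_{k\ge0}e_k\,a_{t-pk},
\]
with the convention $a_j=0$ for $j<0$. The sum is finite, since only $0\le k\le t/p$ contribute.

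Fix a residue class $\rho$ modulo $p$ among those listed in the statement:
\begin{itemize}
\item $\rho\equiv bw$ or $\rho\equiv b(w-3b)$ when $p\equiv1\pmod{12}$;
\item $\rho\equiv bw(1-3b\bar m)$ or $\rho\equiv bw(1-3b\bar n)$ when $p\equiv5\pmod{12}$.
\end{itemize}
These are exactly the progressions in \eqref{mainthmeq2} and \eqref{mainthmeq3}. For any index $t=pt'+\rho$, each term in the convolution has index $t-pk=p(t'-k)+\rho$, which lies in the same progression. By Theorem \ref{mainthm}, $a_{p(t'-k)+\rho}=0$ for every integer $t'-k$; when the index is negative this holds trivially. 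Hence every summand vanishes and $c_{pt'+\rho}=0$, which gives \eqref{maincoreq2} and \eqref{maincoreq3}.

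The only point needing care is that the theorem's vanishing holds for all integers $t$, including shifts. This is what it asserts, since the residue only matters modulo $p$. So no step is a real obstacle; the corollary is a direct consequence of the $p$-dissection being preserved under multiplication by series in $q^p$.
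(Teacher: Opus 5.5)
Your proof is correct and follows the paper's approach: the paper obtains the corollary from Theorem~\ref{mainthm} via the one-line remark that multiplying by a series in $q^p$ (here $1/(q^p;q^p)_\infty^2$) cannot affect vanishing in arithmetic progressions modulo $p$. Your convolution $c_t=\sum_{k\ge0}e_k a_{t-pk}$ simply writes that remark out in full, and it is the same principle the paper uses when it appends the factor $(q^{2p};q^{2p})_\infty^2$ in proving the theorem itself.
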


As an application of Corollary \ref{maincor}, we modify the products in Example \ref{exthem1} and consider just the case $b=1$ (although the results translate directly for an arbitrary positive integer $b$).

\begin{example}\label{excor1}
(1) Let  the sequence $a_t$ be defined by 
    \[
    \frac{Q(q,q^{37})Q(q^{6},q^{37})}{(q^{37};q^{37})_{\infty}^2}  =  (q,q^6,q^{31},q^{26};q^{37})_{\infty} (q^{25},q^{35},q^{39},q^{49};q^{74})_{\infty} = \sum_{t=0}^{\infty} a_t q^t.
    \]
Then for all integers $t$, one has
    \[
        a_{37t+22} = a_{37t+19} = 0.
    \]
(2) Let  the sequence $a_t$ be defined by 
    \[
      \frac{Q(q^{2},q^{29})Q(q^{5},q^{29})}{(q^{29};q^{29})_{\infty}^2}=  (q^2,q^5,q^{24},q^{27};q^{29})_{\infty} (q^{19},q^{25},q^{33},q^{39};q^{58})_{\infty} = \sum_{t=0}^{\infty}a_t q^t.
    \]
Then for all integers $t$, one has
    \[
        a_{29t+42} = a_{29t+20} = 0.
    \]
\end{example}

\begin{remark}
    Unlike the coefficients in the series expansions in Example \ref{exthem1}, most coefficients in the series expansions in Example \ref{excor1} are non-zero. Indeed it follows from a general result proven in \cite{DHMcLY26} that apart from the two arithmetic progressions where coefficients vanish, the coefficients in the other arithmetic progressions are, apart from a few possible zero coefficients near the beginning, eventually either all positive or all negative, tending to either $+\infty$ or $-\infty$.
\end{remark}

\section{Concluding Remarks}

Computer investigations suggest that vanishing coefficient results similar to those proven in the present paper for products of two quintuple products also hold for products of three- and four quintuple products.

For example, if the sequence $\{a_n\}$ is defined by
\[
Q(q,q^{31})Q(q^5,q^{31})Q(q^6,q^{31})=\sum_{n=0}^{\infty}a_nq^n,
\]
then experiment suggests that 
\[
a_{31t+6}=a_{31t+16}=a_{31t+18}=0.
\]
 Similarly, if the sequence $\{b_n\}$ is defined by
\[
Q(q,q^{41})Q(q^4,q^{41})Q(q^5,q^{41})Q(q^9,q^{41})=\sum_{n=0}^{\infty}b_nq^n,
\]
then it appears that 
\[
b_{41t+21}=b_{41t+25}=b_{41t+26}=b_{41t+30}=0.
\]

We will examine these phenomena in a subsequent paper.


{\allowdisplaybreaks

}


\begin{thebibliography}{99}



\bibitem{AG94}
Alladi, K.; Gordon B. \emph{Vanishing coefficients in the expansion of products of Rogers-Ramanujan type.} Proc. Rademacher Centenary Conference, (G. E. Andrews and D. Bressoud, Eds.), Contemp. Math. \textbf{166}, (1994), 129--139.


\bibitem{AB79}
Andrews, G. E.; Bressoud, D. M.
\emph{Vanishing coefficients in infinite product expansions.}
J. Austral. Math. Soc. Ser. A \textbf{27} (1979), no. 2, 199–-202.

\bibitem{BK19}
Baruah, N.D.; Kaur, M. 
\emph{Some results on vanishing coefficients in infinite product expansions}.
Ramanujan J. \textbf{53} (2020), no. 3, 551--568.





\bibitem{CT20}
    Chern, S.; Tang, D.
\emph{Vanishing coefficients in quotients of theta functions of modulus five.}
Bull. Aust. Math. Soc. \textbf{102} (2020), no. 3, 387--398.

\bibitem{CT24}
    Chern, S.; Tang, D.
\emph{General coefficient-vanishing results associated with theta series.}
Adv. in Appl. Math. \emph{159} (2024), Paper No. 102742, 66 pp.

\bibitem{C06}
Cooper, S.
\emph{The quintuple product identity.}
Int. J. Number Theory \textbf{2} (2006), no. 1, 115--161.

\bibitem{D24}
Daniels, T.
\emph{Vanishing coefficients in two q-series related to Legendre-signed partitions.}
Res. Number Theory \textbf{10} (2024), no. 4, Paper No. 81, 12 pp.



\bibitem{DHMcLY26}
Daniels, T.; Huber, T.; McLaughlin, J.; Ye, D. 
\emph{The $p$-Dissection of a Product of Quintuple Products} - submitted


\bibitem{Hirschhorn2017}
Hirschhorn, M.~D.  \emph{The power of $q$: A personal journey}, Springer, 2017.




\bibitem{KV22}    
Kaur, M.; Vanda.
\emph{Results on vanishing coefficients in infinite q-series expansions for certain arithmetic progressions mod7.}
Ramanujan J. \textbf{58} (2022), no. 1, 269–289.


\bibitem{L24}
Liu, J.-C.
\emph{On the vanishing coefficients of odd powers of Ramanujan's theta functions.}
Ramanujan J. \textbf{65} (2024), no. 1, 45--52.



\bibitem{McL}
McLaughlin, J. Mathematica notebook containing the calculations that verify the different parts of Theorem \ref{mainthm} available here:\\ \emph{\href{https://www.wcupa.edu/sciences-mathematics/mathematics/jMcLaughlin/mathnbs.aspx}{https://www.wcupa.edu/sciences-mathematics/mathematics/jMcLaughlin/mathnbs.aspx}
}.

\bibitem{McL15}
McLaughlin, J.
\emph{Further results on vanishing coefficients in infinite product expansions}, 
{J. Aust. Math. Soc.} \textbf{98} (2015), no. 1, 69--77.

  \bibitem{M19a}
McLaughlin,  J. 
 \emph{New infinite q-product expansions with vanishing coefficients.} Ramanujan J \textbf{55}, 733--760 (2021). 

 \bibitem{McLZ22}
McLaughlin,  J. and Zimmer, P.
\emph{Further results on Vanishing Coefficients in  infinite products of the form  
       $(q^b,q^{p-b};q^p)_{\infty}^3(q^{jb},q^{2p-jb};q^{2p})_{\infty}$}
Int. J. Number Theory \textbf{18} (2022), no. 8, 1863--1885.

\bibitem{OR95}
Ono, K.; Robins, S.
Superlacunary cusp forms.
\emph{ Proc. Amer. Math. Soc.} \textbf{123} (1995), no. 4, 1021--1029.



\bibitem{RS78}
Richmond, B.; Szekeres, G.
\emph{The Taylor coefficients of certain infinite products. }
Acta Sci. Math. (Szeged) \textbf{40} (1978), no. 3--4, 347–-369.

\bibitem{ST23}
Somashekara, D. D.; Thulasi, M. B.
\emph{Results on vanishing coefficients in certain infinite q-series expansions.}
Ramanujan J. \textbf{60} (2023), no. 2, 355--369.


 \bibitem{T19}
Tang,  D. 
  \emph{Vanishing coefficients in some q-series expansions.}
   Int. J. Number Theory \textbf{15} (2019), no. 4, 763--773.

    \bibitem{T19b}
Tang,  D. 
\emph{Vanishing coefficients on four quotients of infinite product expansions.}
Bull. Aust. Math. Soc. \textbf{100} (2019), no. 2, 216–224.

 \bibitem{T23}
Tang,  D. 
\emph{Vanishing coefficients in three families of products of theta functions.}
Rev. R. Acad. Cienc. Exactas Fís. Nat. Ser. A Mat. RACSAM \textbf{117} (2023), no. 1, Paper No. 36, 11 pp.

 \bibitem{T23b}
Tang,  D. 
\emph{Vanishing coefficients in three families of products of theta functions. II.}
Results Math. \textbf{78} (2023), no. 4, Paper No. 124, 28 pp.

 \bibitem{T24}
Tang,  D. 
\emph{Vanishing coefficients in powers of theta functions with odd moduli.}
Rocky Mountain J. Math. \textbf{54} (2024), no. 1, 261--267.


\bibitem{VK22}
Vanda; Kaur, M.
\emph{Vanishing coefficients of $q^{5n+r}$ and $q^{11n+r}$ in certain infinite q-product expansions.}
Ann. Comb. \textbf{26} (2022), no. 3, 533--557.


\bibitem{XiaZhao2022}
Xia, Ernest X. W.; Zhao, Alice X. H. 
\emph{Generalizations of Hirschhorn's Results on Two Remarkable $q$-Series Expansions},
\newblock { Experimental Mathematics}, 31(3):878--882, 2022.

\end{thebibliography}
\end{document}